\documentclass[11pt]{article}
\vspace{10mm}
\usepackage{amsmath, amssymb, amscd, amsthm, amsfonts}
\usepackage{hyperref}
\usepackage{xcolor}

\usepackage{titlesec}
\titlespacing*{\paragraph}
  {0pt}   
  {2ex}  
  {2ex}

\oddsidemargin 0pt
\evensidemargin 0pt
\marginparwidth 40pt
\marginparsep 10pt
\topmargin -20pt
\headsep 10pt
\textheight 8.7in
\textwidth 6.65in
\linespread{1.2}

\theoremstyle{definition}
\newtheorem{theorem}{Theorem}[section]
\newtheorem{thm}[theorem]{Theorem}
\newtheorem{lemma}[theorem]{Lemma}

\newtheorem{prop}[theorem]{Proposition}

\newtheorem{example}[theorem]{Example}
\newtheorem{corollary}[theorem]{Corollary}
\newtheorem{question}[theorem]{Question}


\let\ti\widetilde
\def\cl{{\fam0 cl}}
\def\mZ{\mathbb{Z}}

\title{Bipartite holes, degree sums and Hamilton cycles}

\author{M.~N.~Ellingham\footnote{Department of Mathematics, Vanderbilt University, Nashville, TN, USA (\href{mailto:mark.ellingham@vanderbilt.edu}{mark.ellingham@vanderbilt.edu}). Supported by Simons Foundation award MPS-TSM-00002760.} \and Yixuan Huang\footnote{Department of Mathematics, Vanderbilt University, Nashville, TN, USA (\href{mailto:yixuan.huang.2@vanderbilt.edu}{yixuan.huang.2@vanderbilt.edu}).} \and Bing Wei\footnote{Department of Mathematics, University of Mississippi, University, MS, USA (\href{mailto:bwei@olemiss.edu}{bwei@olemiss.edu}). Supported in part by an SEC Faculty Travel Grant.}}

\date{}

\begin{document}

\maketitle

\begin{abstract}
    The {\em bipartite-hole-number} of a graph $G$, denoted as $\ti\alpha(G)$, is the minimum number $k$ such that there exist integers $a$ and $b$ with $a + b = k+1$ such that for any two disjoint sets $A, B \subseteq V(G)$, there is an edge between $A$ and $B$.
    McDiarmid and Yolov initiated research on bipartite holes by extending Dirac's classical theorem on minimum degree and Hamiltonian cycles.
    They showed that a graph on at least three vertices with $\delta(G) \ge \ti\alpha(G)$ is Hamiltonian.
    Later, Dragani\'c, Munh\'a Correia and Sudakov proved that $\delta\ge \ti\alpha(G)$ implies that $G$ is pancyclic, unless $G = K_{\frac n2, \frac n2}$.  This extended the result of McDiarmid and Yolov and generalized a theorem of Bondy on pancyclicity.
    
    In this paper, we show that a $2$-connected graph $G$ is Hamiltonian if $\sigma_2(G) \ge 2 \ti\alpha(G) - 1$, and that a connected graph $G$ contains a cycle through all vertices of degree at least $\ti\alpha(G)$.
    Both results extended McDiarmid and Yolov's result.
    As a step toward proving pancyclicity, we show that if an $n$-vertex graph $G$ satisfies $\sigma_2(G) \ge 2 \ti\alpha(G) - 1$, then it either contains a triangle or it is $K_{\frac n2, \frac n2}$.
    Finally, we discuss the relationship between connectivity and the bipartite hole number.
\end{abstract}

\section{Introduction}

    Throughout this paper, all graphs are simple, i.e., have no loops or multiple edges. 
    We use the notation and terminology of Bondy and Murty \cite{BondyMurty}.

    Hamilton cycles are spanning cycles of a graph.
    The problem of determining whether a graph has a Hamilton cycle, as one of the central topics in graph theory, is one of Karp's 21 NP-complete problems \cite{Karp1972}.
    The classic result by Dirac~\cite{Dirac} states that a graph on $n \ge 3$ vertices has a Hamilton cycle if $\delta(G) \ge n/2$, where $\delta(G)$ is the minimum degree of $G$.
    Another result by Ore~\cite{Ore} which generalizes Dirac's theorem states that a graph has a Hamilton cycle if $\sigma_2(G) \ge n$, where $\sigma_2(G) = \min\{d(x) + d(y): xy \notin E(G)\}$.
    
    An $(s,t)$-{\em bipartite-hole} of $G$ is a pair of disjoint sets $S$ and $T$ such that $|S| = s$, $|T| = t$ and there is no edge between $S$ and $T$.
    The {\em bipartite-hole-number} $\ti\alpha(G)$ is defined by the minimum number $k$ such that $G$ contains no $(s,t)$-bipartite-hole for positive integers $s,t$ with $k + 1 = s + t$.
    In other words, for any positive integers $a,b$ with $a + b = \ti \alpha(G)$, $G$ contains an $(a,b)$-bipartite-hole.
    It is easy to see that $\ti\alpha(G) \geq \alpha(G)$, where $\alpha(G)$ is the independence number of $G$.
    As another generalization of Dirac's theorem that arose from the investigation of the random perfect graph~\cite{MY-random-perfect-graph}, McDiarmid and Yolov proved the following theorem.

\begin{thm}[\cite{MY}]
    \label{MY}
    Let $G$ be a graph with $\delta(G) \geq \ti\alpha(G)$.
    Then $G$ is Hamiltonian.
\end{thm}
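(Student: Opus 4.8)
\noindent The plan is to argue by contradiction and by induction on $|V(G)|$, converting non‑Hamiltonicity into the existence of many bipartite holes. The starting observation is that if $G$ has no $(a,b)$‑bipartite‑hole then it has no $(a',b')$‑bipartite‑hole for any $a'\ge a$, $b'\ge b$ (delete a vertex from one side of a hole to shrink it), so $\ti\alpha(G)\ge m$ if and only if $G$ has an $(a,b)$‑bipartite‑hole for \emph{every} $a,b\ge1$ with $a+b=m$. Hence it suffices to show that a non‑Hamiltonian $G$ with $\delta(G)\ge\ti\alpha(G)$ possesses an $(a,b)$‑bipartite‑hole for every $a,b\ge1$ with $a+b=\delta(G)+1$, since this forces $\ti\alpha(G)\ge\delta(G)+1$, a contradiction. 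We may assume $G$ is not complete, so $\ti\alpha(G),\delta(G)\ge2$; and $G$ is $2$‑connected, because a cut vertex $v$ leaves $G-v$ with components each of size $\ge\delta(G)$, any two of which form a bipartite hole of every shape summing to $\le2\delta(G)$.

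So assume $G$ is $2$‑connected and non‑Hamiltonian and fix a longest cycle $C$, which misses a vertex; let $H$ be a component of $G-C$, put $h=|V(H)|$, $T=N_C(H)=\{u_1,\dots,u_t\}$ in cyclic order on $C$ (so $t\ge2$), and let $u_i^+$ be the successor of $u_i$ on $C$. Standard longest‑cycle manipulations give: every arc of $C$ between consecutive $u_i$'s has an interior vertex (else swap an edge of $C$ for a path through $H$), so $T^+=\{u_1^+,\dots,u_t^+\}$ consists of $t$ distinct vertices of $V(C)\setminus T$; therefore no $u_i^+$ has a neighbour in $H$, and $T^+$ is independent (an edge inside $T^+$ together with a path through $H$ reroutes $C$ into a longer cycle). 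Finally every $w\in V(H)$ has $N(w)\subseteq(V(H)\setminus\{w\})\cup T$, whence $\delta(G)+1\le h+t$.

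Now fix $a,b\ge1$ with $a\le b$ and $a+b=\delta(G)+1\ (\le h+t)$. If $a\le t$, take $A\subseteq T^+$ with $|A|=a$, and let $B$ consist of $\min\{b,h\}$ vertices of $V(H)$ together with, if $b>h$, a further $b-h$ vertices of $T^+\setminus A$ (possible since $|T^+\setminus A|=t-a\ge b-h$); then $(A,B)$ is an $(a,b)$‑bipartite‑hole, $T^+$ being independent and nonadjacent to $V(H)$. This settles every shape with $\min\{a,b\}\le t$, hence all shapes once $t\ge(\delta(G)+1)/2$. Otherwise $\min\{a,b\}\ge t+1$, so $b=\delta(G)+1-a\le\delta(G)-t<\delta(G)+1-t\le h$: the leftover component is large, and I would apply the inductive hypothesis to $H$. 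As $H$ is an induced subgraph, $\ti\alpha(H)\le\ti\alpha(G)\le\delta(G)$, while $\delta(H)\ge\delta(G)-t$. If $\delta(H)<\ti\alpha(H)$, then $\ti\alpha(H)\ge\delta(H)+1\ge\delta(G)+1-t=(a-t)+b$, so $H$ has an $(a-t,b)$‑bipartite‑hole $(A',B')$, and $(A'\cup T^+,\,B')$ is the desired $(a,b)$‑bipartite‑hole of $G$.

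In the remaining sub‑case $\delta(H)\ge\ti\alpha(H)$, induction makes $H$ Hamiltonian, and I expect the rerouting bookkeeping here to be the main technical point. Using $2$‑connectivity one can pick consecutive $u_i,u_{i+1}\in T$ with distinct neighbours $x\ne y$ in $H$; inserting into $C$ the longer of the two $x$–$y$ arcs of a Hamilton cycle of $H$ (it has at least $\lceil h/2\rceil$ edges) shows that if the arc $P_i$ of $C$ between $u_i$ and $u_{i+1}$ has $|P_i|\le\lceil h/2\rceil+1$, then $G$ has a cycle longer than $C$ — impossible; hence $|P_i|\ge\lceil h/2\rceil+2$, and since the other $t-1$ arcs have length $\ge2$, $|C|-t\ge\lceil h/2\rceil+t\ge(h+t)/2\ge a$. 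So we may take $A\subseteq V(C)\setminus T$ with $|A|=a$ and $B\subseteq V(H)$ with $|B|=b$, which have no edge between them. Having produced bipartite holes of every shape summing to $\delta(G)+1$, we obtain the contradiction, so $G$ is Hamiltonian. The fiddliest parts are the longest‑cycle rerouting claims — independence of $T^+$, existence of the ``good'' pair $u_i,u_{i+1}$, and the arc‑length estimate above — which is where I would concentrate the effort.
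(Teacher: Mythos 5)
The paper states Theorem~\ref{MY} as a cited result of McDiarmid and Yolov and gives no proof of it, so there is no in-paper argument to compare yours against; I have therefore checked your proposal on its own terms, and it holds up. The reduction at the start is right: since holes shrink, $\ti\alpha(G)\ge m$ is equivalent to having an $(a,b)$-hole for every $a+b=m$, so exhibiting all shapes summing to $\delta(G)+1$ does yield the contradiction. The 2-connectivity reduction works (for shapes summing to $\delta+1$ both parts have size at most $\delta$, which is all you need; your parenthetical ``every shape summing to $\le 2\delta$'' is a harmless overstatement). The longest-cycle facts ($T\cap T^+=\emptyset$, $T^+$ independent and nonadjacent to $H$, $h+t\ge\delta+1$) are standard and correctly used. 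In the case $a\le t$ the padding of $B$ by $T^+\setminus A$ is legitimate because $b-h\le t-a$. In the case $a\ge t+1$ one gets $b\ge a\ge 3$ and $h\ge b+1\ge 4$, so the inductive application to $H$ is well-founded; the subcase $\delta(H)<\ti\alpha(H)$ correctly transplants an $(a-t,b)$-hole of $H$ to an $(a,b)$-hole of $G$ by adjoining $T^+$; and in the subcase where $H$ is Hamiltonian, the existence of consecutive $u_i,u_{i+1}$ with distinct $H$-neighbours does follow from 2-connectivity (otherwise all of $T$ sees a single vertex of $H$, a cutvertex), the arc-length bound $e(P_i)\ge\lceil h/2\rceil+2$ is correct, and it gives $|V(C)\setminus T|\ge\lceil h/2\rceil+t\ge(h+t)/2\ge a$, so the final hole $(A,B)$ with $A\subseteq V(C)\setminus T$, $B\subseteq V(H)$ exists. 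This is a genuine, self-contained proof; note that it differs in flavour from McDiarmid and Yolov's published argument by running an induction on the leftover component $H$ rather than arguing purely inside the fixed graph, at the cost of the extra bookkeeping you flagged.
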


A graph on $n \ge 3$ vertices is {\em pancyclic} if it contains cycles of length $k$ for all $3 \le k \le n$.
In 1971, Bondy~\cite{Bondy1971pancyclic} proved that if $|E(G)| \ge \frac{n^2}{4}$, then $G$ is pancyclic unless $G$ is the complete balanced bipartite graph $K_{\frac n2, \frac n2}$.
Bondy's result implies that given either Dirac's or Ore's condition, the graph is pancyclic or $K_{\frac n2, \frac n2}$.
Then in 1973, Bondy proposed his famous meta conjecture~\cite{Bondy1973pancyclic}.
It asserts that any non-trivial condition which implies Hamiltonicity, also implies pancyclicity, up to a small class of exceptional graphs.
Recently, Dragani\'c, Munh\'a Correia, Sudakov extended Theorem~\ref{MY} to pancyclicity.

\begin{thm}[\cite{DMS}]
    \label{DMS}
    Let $G$ be a graph with $\delta(G) \geq \ti\alpha(G)$.
    Then $G$ is pancyclic, unless $G \cong K_{\frac n2, \frac n2}$.
\end{thm}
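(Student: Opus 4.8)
The plan is to leverage Theorem~\ref{MY}, which already supplies a Hamilton cycle; it then remains to produce cycles of every length $3\le\ell\le n-1$, or to recognise $K_{\frac n2,\frac n2}$. Three easy consequences of the hypothesis $\delta(G)\ge\ti\alpha(G)$ will be used throughout. Since $\ti\alpha(G)\ge\alpha(G)$, we have $\alpha(G)\le\delta(G)$; taking a minimum-degree vertex together with a maximum set of its non-neighbours gives $\ti\alpha(G)\le n-\delta(G)$, hence $\ti\alpha(G)\le\frac n2$; and, unravelling the definition of $\ti\alpha$, there are positive integers $a_0,b_0$ with $a_0+b_0=\ti\alpha(G)+1\le\delta(G)+1$ such that \emph{every} pair of disjoint sets of sizes at least $a_0$ and $b_0$ spans an edge. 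This last fact is essentially the only leverage the hypothesis offers, since --- unlike Dirac's $\delta(G)\ge\frac n2$ --- it imposes no uniform lower bound on individual degrees.

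The first step is to dispose of the triangle-free case. If $G$ is triangle-free, every neighbourhood is independent, so $\alpha(G)\ge\Delta(G)\ge\delta(G)\ge\alpha(G)$ by the above, forcing $\alpha(G)=\Delta(G)=\delta(G)=:d$ and hence $G$ to be $d$-regular; triangle-freeness also gives $n\ge 2d$. If $n>2d$ I would derive a contradiction by exhibiting, from a vertex $v$, its independent neighbourhood $N(v)$ of size $d$, and the set $V(G)\setminus N[v]$ of size at least $d$, a bipartite hole of \emph{every} shape $(a,b)$ with $a+b=d+1$, which forces $\ti\alpha(G)\ge d+1>\delta(G)$. Hence $n=2d$; then for any edge $uv$ the sets $N(u),N(v)$ are disjoint independent sets whose union is $V(G)$, with $|N(u)|=|N(v)|=\frac n2$, so $G$ is balanced complete bipartite, i.e.\ $G=K_{\frac n2,\frac n2}$. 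From now on $G$ contains a triangle, and the goal is outright pancyclicity.

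For the main part I would run a descent on cycle lengths. Assuming $G$ is not pancyclic, let $\ell$ be the largest missing length; since $G$ has a triangle and a Hamilton cycle, $4\le\ell\le n-1$ and $G$ has a cycle $D$ of length $\ell+1$, which I aim to shorten by one. The standard moves apply: a chord of $D$ joining two vertices at distance two along $D$ yields a $C_\ell$ directly; more generally, an appropriately placed chord, or a length-two path through vertices outside $D$ joining two vertices of $D$ at the right distance, also produces a $C_\ell$. If every such move fails, the neighbourhoods on $D$ of the vertices outside $D$ and the chord pattern of $D$ become so rigid that one can extract disjoint sets $S,T$ with $|S|+|T|=\ti\alpha(G)+1$ and no edge between them for \emph{every} splitting of that total --- contradicting $\ti\alpha(G)\le\delta(G)$ --- unless the configuration is precisely that of $K_{\frac n2,\frac n2}$, which has been excluded. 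Combined with the triangle and the Hamilton cycle, this delivers all lengths $3\le\ell\le n$.

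The real obstacle is this last implication, which recurs in both the triangle-free step and the descent: converting a ``cannot be shortened'' (or ``not enough room for independent sets'') configuration into a violation of $\ti\alpha(G)\le\delta(G)$ means producing, for \emph{every} shape $a+b=\ti\alpha(G)+1$, a matching edgeless bipartite pair --- not just one convenient shape --- so Bondy's chord-counting does not carry over, and it requires pinning down $K_{\frac n2,\frac n2}$ as the unique configuration where this breaks. The complementary regime where $\ti\alpha(G)$ is small, so that $G$ is close to complete and pancyclicity is immediate, together with parity bookkeeping for $n$, should be routine by comparison.
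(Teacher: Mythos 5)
First, note that the paper does not prove Theorem~\ref{DMS} at all --- it is quoted from \cite{DMS} --- so your proposal has to stand on its own, and it does not. Your triangle-free reduction starts well: the chain $\alpha\ge\Delta\ge\delta\ge\ti\alpha\ge\alpha$ forcing $G$ to be $d$-regular with $\alpha=d$ and $n\ge 2d$, and the identification of $K_{d,d}$ when $n=2d$, are correct. But the step that rules out $n>2d$ is broken. The pair $N(v)$, $V(G)\setminus N[v]$ is not a bipartite hole: $N(v)$ is independent, so every edge leaving $N(v)$ other than the $d$ edges to $v$ lands in $V(G)\setminus N[v]$, and there are typically many such edges. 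Nor can you fit disjoint sets of sizes $a$ and $b$ with $a+b=d+1$ inside the independent set $N(v)$, which has only $d$ vertices. Producing a hole of \emph{every} shape $(a,b)$ with $a+b=d+1$ is exactly the hard content here, and nothing in your sketch delivers it. (The present paper's Theorem~\ref{triangle or bip} proves precisely this dichotomy --- a triangle or $K_{\frac n2,\frac n2}$ --- under the weaker hypothesis $\sigma_2\ge 2\ti\alpha-1$, and even there the argument requires a careful analysis of a shortest induced odd cycle; citing or adapting that result would be a sounder route than your direct construction.)

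For the main part, you yourself label the key implication ``the real obstacle'' and then do not supply it. Shortening a cycle of length $\ell+1$ to one of length $\ell$ and, whenever every chord or two-step detour fails, extracting edgeless pairs $(S,T)$ for \emph{every} splitting of $\ti\alpha(G)+1$ is the entire theorem: as you observe, $\delta\ge\ti\alpha$ imposes no pointwise degree bound comparable to $n/2$, so Bondy-style chord counting is unavailable, and the actual argument in \cite{DMS} relies on rotation--extension and expansion machinery rather than a one-step descent on cycle lengths. As written, this is a plan with its two decisive steps missing rather than a proof.
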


As an analog of Ore's result, we prove that if $G$ is $2$-connected and $\sigma_2(G) \ge 2 \ti\alpha(G) - 1$, then $G$ is Hamiltonian. 
For two complete graphs $K_a,K_b$, we denote by $K_a \oplus_1 K_b$ the graph obtained by identifying a vertex of $K_a$ and a vertex of $K_b$, that is, a {\em 1-sum} of two complete graphs.

\begin{thm}
    \label{sigma2}
    Let $G$ be a graph on $n$ vertices with $\sigma_2(G) \ge 2 \ti \alpha(G) - 1$.
    Then, $G$ is Hamiltonian, unless $G$ is a subgraph of $K_{k} \oplus_1 K_{n+1-k}$ for some $k \le \frac{n+2}4$.
\end{thm}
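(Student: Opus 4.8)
The plan is to reduce to the $2$-connected case and there run a longest-cycle argument, converting each structural obstruction into a bipartite hole.

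First I would dispose of graphs that are not $2$-connected. If $G$ is disconnected, or has a cut vertex $v$, then taking one component of $G$ (or of $G-v$) for $S$ and the remaining non-$v$ vertices for $T$ gives disjoint sets covering all but at most one vertex of $G$ with no edge between them, so $\ti\alpha(G) \ge n-2$ and the hypothesis forces $\sigma_2(G) \ge 2n-5$. On the other hand two vertices lying in different pieces of such a separation are nonadjacent with degree sum at most $n-1$, so $n \le 4$, and a direct check of the finitely many small graphs shows each such $G$ is a spanning subgraph of $K_k \oplus_1 K_{n+1-k}$ with $k \le (n+2)/4$ (in fact this case is essentially vacuous once $n \ge 5$). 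So from now on $G$ is $2$-connected, and we may assume $G$ is not complete, since otherwise it is Hamiltonian.

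Next I would bring in a long cycle, using the classical fact (Bondy) that a $2$-connected graph has a cycle of length at least $\min\{n,\sigma_2(G)\} \ge \min\{n, 2\ti\alpha(G)-1\}$. If $2\ti\alpha(G)-1 \ge n$ we are done, so assume $2\ti\alpha(G)-1 < n$ and let $C$ be a \emph{longest} cycle, so $|V(C)| \ge 2\ti\alpha(G)-1$; if $|V(C)|=n$ we are done, so suppose not and fix a component $D$ of $G-V(C)$ with $N(D)=\{u_1,\dots,u_r\}\subseteq V(C)$, where $r\ge 2$ by $2$-connectivity. The usual rerouting arguments for longest cycles show that the successors $u_1^+,\dots,u_r^+$ around $C$ are distinct, avoid $N(D)$, and are pairwise nonadjacent. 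Hence $(\{u_1^+,\dots,u_r^+\},\,V(D))$ is a bipartite hole, giving $\ti\alpha(G)\ge r+|D|-1$, and $(V(G)\setminus(V(D)\cup N(D)),\,V(D))$ is a bipartite hole, giving $\ti\alpha(G)\ge n-r-1$. These two inequalities together with $|V(C)|\ge 2\ti\alpha(G)-1$ and $|V(C)|\le n-|D|$ force $|D|=1$; writing $D=\{x\}$ (so $r=d(x)$), the set $\{x\}\cup\{u_1^+,\dots,u_r^+\}$ is in fact independent of size $d(x)+1$, whence $\alpha(G)\ge d(x)+1$. Combining $\ti\alpha(G)\ge\alpha(G)\ge d(x)+1$, $\ti\alpha(G)\ge n-d(x)-1$, and $\ti\alpha(G)\le(|V(C)|+1)/2\le n/2$ pins everything down: $n$ is even, $\ti\alpha(G)=\alpha(G)=n/2$, $d(x)=n/2-1$, and $|V(C)|=n-1$.

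It remains to rule out this one tight configuration, which I expect to be the main obstacle. Set $I:=\{x\}\cup\{u_1^+,\dots,u_{n/2-1}^+\}$, a maximum independent set of size $n/2$, and $\overline I:=V(G)\setminus I$, also of size $n/2$. Every nonneighbor of $x$ — in particular every $u_i^+$ — has degree at least $n/2$ by the degree-sum condition, and since $I$ is independent all of its neighbors lie in $\overline I$; hence each $u_i^+$ is adjacent to \emph{all} of $\overline I$. Therefore the bipartite subgraph of $G$ between $I$ and $\overline I$ is $K_{n/2,n/2}$ with at most the single edge from $x$ to the unique vertex of $\overline I$ outside $N(x)$ removed; for $n\ge 6$ such a graph has a Hamilton cycle, which is a Hamilton cycle of $G$, a contradiction, while for $n\le 4$ there is no $2$-connected non-Hamiltonian graph at all, so the case is vacuous. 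The genuinely delicate points are verifying the rerouting claims for a longest cycle, arranging the two bipartite holes so the arithmetic closes to $|D|=1$, and then handling this equality case by recognizing the forced near-complete bipartite structure; the non-$2$-connected reduction and the remaining bookkeeping are routine.
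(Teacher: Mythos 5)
Your argument founders on a misreading of the definition of $\ti\alpha(G)$, and the error is fatal in both halves of the proof. To conclude $\ti\alpha(G)\ge m$ one must exhibit an $(s,t)$-bipartite-hole for \emph{every} split $s+t\le m$; a single $(a,b)$-hole only yields the holes obtained by shrinking it, hence only $\ti\alpha(G)\ge\min(a,b)+1$, not $\ti\alpha(G)\ge a+b-1$. Your opening reduction uses exactly the false implication: a cut vertex gives one hole with $|S|+|T|=n-1$, but this does not force $\ti\alpha(G)\ge n-2$ (that would require, e.g., a $(1,n-3)$-hole, i.e.\ a vertex of degree at most $2$). Concretely, $K_2\oplus_1 K_{n-1}$ has a cut vertex, $\ti\alpha=3$, and $\sigma_2=n-1\ge 2\ti\alpha-1$ for all $n\ge 6$, so the non-$2$-connected case is very far from ``vacuous once $n\ge 5$'' --- indeed the entire exceptional family $K_k\oplus_1 K_{n+1-k}$, $k\le\frac{n+2}{4}$, that the theorem must accommodate lives there, and it is the whole content of Case~2.2 of the paper's proof. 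Your Step~1 would ``prove'' these graphs do not exist.

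The same error recurs in the $2$-connected analysis: from the $(r,|D|)$-hole $(\{u_1^+,\dots,u_r^+\},V(D))$ you may only deduce $\ti\alpha(G)\ge\min(r,|D|)+1$, and from $(V(G)\setminus(V(D)\cup N(D)),V(D))$ only $\ti\alpha(G)\ge\min(n-r-|D|,|D|)+1$; neither gives $\ti\alpha(G)\ge r+|D|-1$ or $\ti\alpha(G)\ge n-r-1$, so the arithmetic forcing $|D|=1$ and then $\ti\alpha=n/2$ does not close. The correct way to use the hypothesis --- and the route the paper takes --- is the reverse direction: fix the particular split $s\le t$ with $s+t=\ti\alpha(G)+1$ for which $G$ has \emph{no} $(s,t)$-hole, and show that the structural obstruction (there, predecessor/successor sets along a Hamilton path of an edge-maximal counterexample) would create an $(s,t)$-hole for that specific split, or else bound a degree by $(s-1)+(t-1)<\ti\alpha$. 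If you want to salvage your longest-cycle framework you would need to rebuild every inequality around one fixed forbidden split $(s,t)$ rather than around the total size of the holes you construct, and you would still need a genuine analysis of the cut-vertex case to recover the exceptional family.
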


By Theorem~\ref{sigma2}, if $G$ is $2$-connected in addition to $\sigma_2(G) \ge 2 \ti\alpha(G) - 1$, $G$ is immediately Hamiltonian.

\begin{theorem}
    \label{2-conn}
    If $G$ is a $2$-connected graph such that $\sigma_2(G) \ge 2 \ti \alpha(G) - 1$.
    Then, $G$ is Hamiltonian.
\end{theorem}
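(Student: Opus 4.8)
The plan is to derive Theorem~\ref{2-conn} directly from Theorem~\ref{sigma2}. Let $G$ be a $2$-connected graph on $n$ vertices with $\sigma_2(G) \ge 2\ti\alpha(G) - 1$. By Theorem~\ref{sigma2}, either $G$ is Hamiltonian, and there is nothing more to prove, or $G$ is a subgraph of $K_k \oplus_1 K_{n+1-k}$ for some $k \le \frac{n+2}{4}$. Since $K_k \oplus_1 K_{n+1-k}$ has exactly $k + (n+1-k) - 1 = n$ vertices, in the latter case $G$ is in fact a spanning subgraph of it. Hence the entire task reduces to showing that this second alternative is incompatible with $2$-connectedness.

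To do this I would record one elementary structural fact. Write $H = K_k \oplus_1 K_{n+1-k}$ and let $v$ be the vertex produced by the identification, so that $V(H) \setminus \{v\} = A \cup B$ where $A = V(K_k) \setminus \{v\}$, $B = V(K_{n+1-k}) \setminus \{v\}$, $|A| = k-1$, $|B| = n-k$, and no edge of $H$ joins $A$ to $B$. When $2 \le k \le n-1$, both $A$ and $B$ are nonempty, so $v$ is a cut vertex of $H$; moreover, any spanning subgraph $G \subseteq H$ satisfies $E(G) \subseteq E(H)$ and therefore also has no edge between $A$ and $B$, so $v$ is a cut vertex of $G$ as well (or $G$ is disconnected outright). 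Either way $G$ is not $2$-connected. Since $\frac{n+2}{4} \le n-1$ for every $n \ge 2$, each integer $k$ with $2 \le k \le \frac{n+2}{4}$ lies in the range just described, while the degenerate value $k = 1$ gives $H = K_n$ and so imposes no restriction on $G$ at all. Consequently a $2$-connected graph $G$ cannot be a spanning subgraph of $K_k \oplus_1 K_{n+1-k}$ for any admissible $k$, the exceptional case of Theorem~\ref{sigma2} cannot occur, and $G$ is Hamiltonian.

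I do not expect any real obstacle: all of the genuine difficulty is already absorbed into Theorem~\ref{sigma2}, and what is left is only the easy observation above, namely that a $1$-sum of two nontrivial complete graphs---and hence also each of its spanning subgraphs---is separated by the gluing vertex, so no such graph is $2$-connected.
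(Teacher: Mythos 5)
Your proposal is correct and is essentially the paper's own argument: the paper derives Theorem~\ref{2-conn} from Theorem~\ref{sigma2} by simply observing that the exceptional graphs are not $2$-connected, and you have filled in the easy detail that the identification vertex of $K_k \oplus_1 K_{n+1-k}$ (and hence of any spanning subgraph) is a cut vertex when $2 \le k \le n-1$. The only small wrinkle is your treatment of $k=1$: as literally written that value would make the exceptional clause vacuous, but the proof of Theorem~\ref{sigma2} in fact only produces exceptions with $k = d+1 \ge 2$ (a genuine cut vertex), so the derivation stands under the intended reading.
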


Given a vertex subset $S$ of $G$, an $S$-{\em cycle} of $G$ is a cycle that contains $S$, and $G$ is $S$-{\em cyclable} if $G$ contains an $S$-cycle.
In particular, a $V(G)$-cycle is a Hamilton cycle.
A cycle is $X$-{\em longest} if it contains the most vertices of $X$.
We show that $G$ contains a cycle through all vertices of degree at least $\ti\alpha(G)$.

\begin{theorem}
    \label{Xcycle}
    Let $G$ be a graph and $X = \{x \, : \, d_G(x) \ge \ti\alpha(G) \}$.
    Then $G$ is $X$-cyclable.
\end{theorem}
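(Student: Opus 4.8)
The plan is to argue by contradiction: put $k := \ti\alpha(G)$, suppose $G$ has no cycle meeting all of $X$, and show that \emph{every} partition $k+1 = a+b$ into positive integers produces an $(a,b)$-bipartite-hole in $G$ — which forces $\ti\alpha(G)\ge k+1$, a contradiction. One may assume $|X|\ge 2$, the smaller cases being trivial.

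First I would reduce to the $2$-connected case. Suppose some single vertex $u$ separates two vertices $x_1,x_2$ of $X$ (passing first to a component of $G$ if $G$ is disconnected), and write $G-u=H_1\cup H_2$ with $x_i\in H_i$. Then $N(x_i)\subseteq V(H_i)\cup\{u\}$, so $|V(H_i)|\ge d_G(x_i)\ge k$; since $V(H_1)$ and $V(H_2)$ are disjoint with no edge between them, any split $k+1=a+b$ with $1\le a\le b$ gives an $(a,b)$-hole (the $a$-side inside $V(H_2)$, the $b$-side inside $V(H_1)$, both sizes $\le k$), contradicting $\ti\alpha(G)=k$; the case of $G$ disconnected with $X$ meeting two components is analogous. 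Hence $X$ lies in a single block $B_0$, and since every cycle lies in one block it suffices to find a cycle through $X$ inside $B_0$; so we may assume $G$ is $2$-connected (the possibility $B_0\cong K_2$ forces $|X|\le 2$ and is handled directly).

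Now take an $X$-longest cycle $C$, chosen with $|V(C)|$ maximum among such. If $X\subseteq V(C)$ we are done; otherwise fix $x\in X\setminus V(C)$ and let $D$ be the component of $G-V(C)$ containing $x$, which by $2$-connectivity has at least two attachment vertices on $C$. The engine of the proof is a P\'osa-type rotation argument: starting from a suitable longest path through $D$ that involves $x$ (for instance joining two attachments of $D$ on $C$ with $x$ in its interior), I would rotate repeatedly to build the set $S$ of reachable endpoints. The double extremality of $C$ — first in the number of $X$-vertices, then in length — is precisely what prevents such a path from being extended into $C$ or rerouted through $D$ to capture an extra $X$-vertex, and it forces, for each $s\in S$, that no neighbour of $s$ lies off $C$ outside the current path and that no neighbour of $s$ is the $C$-successor of an attachment of $D$ reachable from $s$. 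Collecting those forbidden $C$-successors, together with the $D$-vertices bypassed by the rotated paths, into a set $T$ gives $S\cap T=\emptyset$ and $N(S)\cap T=\emptyset$ — a bipartite hole.

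The last step, bounding $|S|$ and $|T|$, is where I expect the real difficulty. Since $x\in X$ we have $d_G(x)\ge k$, and tracking degrees only along the $X$-vertices that the rotations pass through, the rotation lemma should yield $|S|+|T|\ge d_G(x)+1\ge k+1$ and, more, that $(S,T)$ can be chosen ``robustly'', so that for \emph{every} split $k+1=a+b$ one side of an $(a,b)$-hole fits in $S$ and the other in $T$, completing the contradiction. The obstacle is twofold. First, in Theorem~\ref{MY} every vertex is heavy, so rotations iterate at will; here only the vertices of $X$ are heavy, so every rotation must be anchored at an $X$-vertex (one must keep $x$, or another $X$-vertex of $D$, available as a pivot) and the light vertices of $D$ must be handled separately. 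Second, covering all splits requires the final hole to be simultaneously \emph{large} — one side of size $\ge k$, for the split $(1,k)$ — and \emph{balanced} — both sides of size about $(k+1)/2$, for the middle split — so one must make $|S|$ and $|T|$ each sizeable rather than just their sum; obtaining both from a single rotation set, and checking that extremality of $C$ really forbids every relevant ``crossing'' edge among $T$ and between $D$ and $C$, is the crux.
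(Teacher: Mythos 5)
Your setup matches the paper's: the reduction showing that $X$ lies in a single block (via the two neighbourhoods of separated $X$-vertices forming a bipartite hole) is essentially the paper's first step, and the choice of an $X$-longest cycle $C$, refined by length, is also the paper's extremal object. But from that point on your argument is only a strategy, not a proof, and the strategy you choose is one whose central difficulty you yourself flag and do not resolve. The rotation--extension engine needs high-degree endpoints to keep generating new reachable endpoints, and as you note, here only the vertices of $X$ are heavy; you never explain how to anchor every rotation at an $X$-vertex, nor how the ``robust'' hole covering the required split $a+b=k+1$ is actually extracted. Saying that the rotation lemma ``should yield'' $|S|+|T|\ge d_G(x)+1$ and that obtaining both a large and a balanced hole ``is the crux'' is an accurate diagnosis of the obstacle, but it leaves the theorem unproved. (A smaller point: you do not need a hole for \emph{every} split $a+b=k+1$; it suffices to exhibit one for the particular pair $s\le t$ with $s+t=\ti\alpha(G)+1$ for which $G$ is guaranteed to have no $(s,t)$-hole. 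Aiming at all splits makes your task strictly harder than necessary.)

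For comparison, the paper avoids rotations entirely. With $H$ the component of $G-V(C)$ meeting $X$ and $T=N(H)\cap V(C)$, it shows by two standard cycle-exchange claims that $T^+$ is independent and has no edges to $H$; since a vertex $x\in H\cap X$ gives $|H\cup T^+|\ge s+t$, the absence of an $(s,t)$-hole forces $|T|<s$ and hence $|H|\ge t+1$. A second hole, between $H$ and $V(G)\setminus(H\cup T)$, then forces $|V(G)\setminus H|\le \ti\alpha(G)-1$, so that $X\subseteq H\cup T$ and every $x\in T\cap X$ has at least two neighbours in $H$. The contradiction is finally obtained not by counting endpoints but by a block decomposition of $H$: either two consecutive $X$-vertices of $C$ attach to different parts of $H$, or all attachments land in one part and the $2$-connectivity of the block containing $X$ supplies an escape edge; in either case one reroutes through $H$ to pick up an extra $X$-vertex, contradicting $X$-maximality of $C$. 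This structural analysis of $H$ is precisely the content your sketch is missing.
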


This result generalizes McDiarmid and Yolov's Theorem~\ref{MY} and it is sharp in the sense that we cannot expect a cycle through all vertices of degree at least $\ti\alpha(G) - 1$ by the example $K_{k,k+1}$, where $\ti\alpha(K_{k,k+1}) = k+1$ and $K_{k,k+1}$ has no Hamilton cycle.

As a step toward the pancyclicity, we prove the following.

\begin{theorem}
    \label{triangle or bip}
    Let $G$ be a graph such that $\sigma_2 \ge 2 \ti\alpha(G) - 1$. 
    Then either $G$ contains a triangle or $G$ is a balanced complete bipartite graph, i.e., $G \cong K_{n,n}$.
\end{theorem}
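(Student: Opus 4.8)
The plan is to read off the degree structure of $G$ from the hypothesis, then show $G$ is bipartite, and finally identify it as $K_{\frac n2,\frac n2}$; throughout write $\Delta=\Delta(G)$.

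\emph{Reductions and degree structure.} If $G$ is complete then, being triangle-free, $|V(G)|\le 2$ and $G=K_2=K_{1,1}$; if $\Delta\le 1$ then $G$ is a matching plus isolated vertices and $\sigma_2(G)\ge 2\ti\alpha(G)-1$ again forces $G=K_2$. So assume $G$ has a non-edge and $\Delta\ge 2$. Pick $v$ with $d(v)=\Delta$; since $G$ is triangle-free, $N(v)$ is independent of size $\Delta$, so $G$ has an $(a,b)$-bipartite-hole whenever $a+b\le\Delta$ (split $N(v)$), i.e.\ $\ti\alpha(G)\ge\Delta$. As $\sigma_2(G)\le 2\Delta$ and $2\ti\alpha(G)-1\le\sigma_2(G)$, we get $\ti\alpha(G)\le\Delta$, hence $\ti\alpha(G)=\Delta=:d$. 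Now every non-adjacent pair has degree sum $\ge 2d-1$. A vertex of degree $\le d-2$ is non-universal (a universal vertex has degree $\ge\Delta=d$), so it has a non-neighbour, producing a non-adjacent pair of degree sum $\le 2d-2$ --- impossible; thus $\delta(G)\ge d-1$ and every degree lies in $\{d-1,d\}$. Similarly two vertices of degree $d-1$ cannot be non-adjacent, so they are pairwise adjacent, and by triangle-freeness there are at most two of them, adjacent if there are two.

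\emph{Bipartiteness.} I will show that \emph{every triangle-free non-bipartite graph $H$ has $\ti\alpha(H)\ge\Delta(H)+1$}; applied to $G$ (where $\ti\alpha(G)=\Delta(G)$) this forces $G$ to be bipartite. Fix $v$ with $d(v)=\Delta(H)$ and suppose, for contradiction, that no vertex $u\notin N[v]$ has $|N(u)\cap N(v)|\le\lfloor\Delta(H)/2\rfloor$. Then no vertex is at distance $\ge 3$ from $v$, so $V(H)=\{v\}\cup N(v)\cup N_2(v)$ with $N_2(v)$ the set of vertices at distance $2$; moreover each $u\in N_2(v)$ is adjacent to more than half of $N(v)$, so any two vertices of $N_2(v)$ share a neighbour in $N(v)$ and hence are non-adjacent. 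Then $\{v\}\cup N_2(v)$ and $N(v)$ are independent and every edge of $H$ joins them, so $H$ is bipartite --- contradiction. So such a $u$ exists; put $S=N(v)$, so $|S\cup\{u\}|=\Delta(H)+1$. Given $a\ge b\ge 1$ with $a+b=\Delta(H)+1$, we have $|N(u)\cap S|+1\le\lfloor\Delta(H)/2\rfloor+1=\lceil(\Delta(H)+1)/2\rceil\le a$, so $u$ together with $N(u)\cap S$ and enough further vertices of $S$ form a set $A$ of size $a$ with $B:=(S\cup\{u\})\setminus A\subseteq S\setminus N(u)$ of size $b$; the only edges inside $S\cup\{u\}$ join $u$ to $N(u)\cap S$ and lie in $A$, so $(A,B)$ is an $(a,b)$-bipartite-hole. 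Hence $\ti\alpha(H)\ge\Delta(H)+1$.

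\emph{Finishing.} Let $X,Y$ be the parts of the now-bipartite $G$. A part of size $\ge d+1$ would, being independent, contain an $(a,b)$-bipartite-hole for every $a+b=d+1$, so $\ti\alpha(G)\ge d+1$; hence $|X|,|Y|\le d$. If a part has size $1$ then $G$ is a star (as $\delta(G)\ge d-1\ge 1$), already handled; so both parts have $\ge 2$ vertices, each therefore contains a vertex of degree $d$, and such a vertex has $d$ neighbours in the other part, giving $|X|=|Y|=d$. Finally, if $G\ne K_{d,d}$ then some vertex, say $x_0\in X$, has degree $d-1$ and misses exactly one $y_1\in Y$; but then $d(y_1)\le|X|-1=d-1$, so $x_0,y_1$ are two non-adjacent vertices of degree $d-1$, contradicting the structure found above. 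Hence $G=K_{d,d}=K_{\frac n2,\frac n2}$. The crux is the bipartiteness lemma: the dichotomy that yields either a vertex $u$ with small overlap $|N(u)\cap N(v)|$ or else an explicit bipartition of $G$, and then the check that $N(v)\cup\{u\}$ realises a bipartite hole of \emph{every} admissible type, since $\ti\alpha$ is governed by its worst split; everything else is bookkeeping together with the degenerate cases $|V(G)|\le 2$ and stars.
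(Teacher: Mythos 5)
Your proof is correct, but it takes a genuinely different route from the paper's. The paper assumes $G$ is triangle-free and non-bipartite, passes to a shortest induced odd cycle $C_l$, proves that $C_l$ cannot carry three high-degree vertices $x,y,z$ with $yz\in E$ and $xy,xz\notin E$ (via the common-neighbourhood counting $|N(x)\cap N(y)|\ge t$), forces $l=5$ using the fact that at most two vertices have degree below $\ti\alpha(G)$, and then kills the $C_5$ by a second neighbourhood argument. You instead first pin down the exact degree structure from the hypothesis ($\ti\alpha(G)=\Delta(G)=d$, $\delta\ge d-1$, at most two vertices of degree $d-1$ and they are adjacent) and then prove a clean, self-contained lemma: every triangle-free non-bipartite graph $H$ satisfies $\ti\alpha(H)\ge\Delta(H)+1$, obtained by locating a vertex $u\notin N[v]$ with $|N(u)\cap N(v)|\le\lfloor\Delta/2\rfloor$ (whose non-existence forces the explicit bipartition $\{v\}\cup N_2(v)$ versus $N(v)$) and checking that $N(v)\cup\{u\}$ realises every split of $\Delta+1$ as a bipartite hole. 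Your lemma is arguably the more reusable statement, as it isolates the interaction between triangle-freeness, non-bipartiteness and $\ti\alpha$ independently of the degree-sum hypothesis, whereas the paper's argument is more local and leans on the $|Y|\le 2$ bound. Two cosmetic points: the phrase ``already handled'' for the star case is not literally accurate (stars with $\Delta\ge 2$ were not among your initial reductions), but stars are immediately excluded by the facts you had just established ($\delta\ge d-1$ plus the non-existence of two non-adjacent degree-$(d-1)$ vertices); and in the lemma you should note that the holes with $a+b\le\Delta$ come from splitting the independent set $N(v)$, or from restricting the $(a,b)$-holes you build at level $\Delta+1$ --- both are one-line observations. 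Neither affects correctness.
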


We also investigate the relation of the bipartite-hole-number $\ti\alpha(G)$ and the connectivity $\kappa(G)$ of a graph $G$, though the connection between them is not immediately apparent from the definition.
McDiarmid and Yolov~\cite{MY} proved an inequality involving these two parameters and also the minimum degree $\delta(G)$.  Although they stated it for all graphs, in fact it holds only for non-complete graphs.

\begin{lemma}{\cite{MY}}
    \label{kappa-delta-alphatilde}
    For every non-complete graph $G$, $\kappa(G) \ge \delta(G) + 2 - \ti\alpha(G)$.
\end{lemma}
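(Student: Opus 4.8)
The plan is to show directly that a minimum vertex cut of $G$ conceals a bipartite hole whose two parts sum to at least $\delta(G)+2-\kappa(G)$. Write $\kappa = \kappa(G)$ and $\delta = \delta(G)$, and let $C$ be a minimum vertex cut, so $|C| = \kappa$. Since $G$ is non-complete, $G - C$ has at least two components; let $A$ be the vertex set of one of them and pick a vertex $b$ in another component, so that $b \notin A \cup C$ and there is no edge of $G$ between $A$ and $b$.

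First I would bound $|A|$ from below. Every vertex $a \in A$ has all of its neighbours in $(A \setminus \{a\}) \cup C$, because $A$ induces a component of $G - C$; hence $\delta \le d_G(a) \le |A| - 1 + \kappa$, which gives $|A| \ge \delta + 1 - \kappa$, and of course $|A| \ge 1$ as well. Next I would assemble the hole: put $s = \max\{1,\, \delta + 1 - \kappa\}$, choose $S \subseteq A$ with $|S| = s$ (possible since $s \le |A|$), and set $T = \{b\}$. Then $(S,T)$ is an $(s,1)$-bipartite hole of $G$. Since $\ti\alpha(G)$ is at least the total size of any bipartite hole present in $G$ (immediate from the definition, as noted below), we conclude $\ti\alpha(G) \ge s + 1 \ge (\delta + 1 - \kappa) + 1 = \delta + 2 - \kappa$, and rearranging gives $\kappa \ge \delta + 2 - \ti\alpha(G)$, as claimed.

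I do not expect a substantive obstacle here; the argument is short, and the only points needing care are bookkeeping. Non-completeness is used twice over: once to guarantee that a vertex cut $C$ exists at all, and once more to guarantee that $G - C$ actually splits off a separate vertex $b$. The boundary of the estimate must be watched, since $\delta + 1 - \kappa$ can equal $1$ or be nonpositive; taking $s = \max\{1, \delta + 1 - \kappa\}$ is what keeps the constant $2$ in the conclusion. Finally, it is worth recording explicitly the small fact that the existence of a single $(s,t)$-bipartite hole forces $\ti\alpha(G) \ge s+t$: if $G$ has such a hole, then $k = s+t-1$ fails the condition in the definition of $\ti\alpha(G)$, and that condition is preserved under increasing $k$, so $\ti\alpha(G) \ge s+t$. (Using instead the fact that both sides of $C$ have size at least $\delta+1-\kappa$ would only give the weaker bound $\kappa \ge \delta + 1 - \ti\alpha(G)/2$, so the asymmetric choice $|T|=1$ is the right one.)
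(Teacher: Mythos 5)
The construction of the minimum cut and the bound $|A| \ge \delta + 1 - \kappa$ are fine, but the step you label a ``small fact'' --- that the existence of a single $(s,t)$-bipartite hole forces $\ti\alpha(G) \ge s+t$ --- is false under the definition used here, and the proof breaks exactly there. By the definition, $\ti\alpha(G) \ge m$ requires that \emph{every} pair of positive integers $(a,b)$ with $a+b=m$ yields an $(a,b)$-bipartite hole; exhibiting one hole of one particular shape is not enough. (Your monotonicity remark is correct --- the defining condition is preserved as $k$ increases --- but ``$k=s+t-1$ fails the condition'' means that \emph{all} splits of $s+t$ admit holes, not just the split $(s,t)$.) A concrete counterexample to your claimed fact: $G = K_6 \cup K_1$ has a $(6,1)$-bipartite hole, yet $\ti\alpha(G)=3$, because $G$ has no $(2,2)$-bipartite hole. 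So from your $(s,1)$-hole with $s=\delta+1-\kappa$ you cannot conclude $\ti\alpha(G)\ge \delta+2-\kappa$.

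The symmetric argument you dismiss in your final parenthesis is in fact the correct one, and it gives exactly the stated bound rather than a weaker one. Let $B'=V(G)\setminus(A\cup C)$ be the union of the remaining components of $G-C$; the same degree count gives $|B'|\ge \delta+1-\kappa$, and there are no edges between $A$ and $B'$. Now take any positive integers $a,b$ with $a+b=\delta+2-\kappa$; then $a\le \delta+1-\kappa\le|A|$ and $b\le \delta+1-\kappa\le|B'|$, so choosing $a$ vertices of $A$ and $b$ vertices of $B'$ produces an $(a,b)$-bipartite hole. Since every split of $\delta+2-\kappa$ (and, by passing to subsets, of every smaller sum) is realized, $\ti\alpha(G)\ge\delta+2-\kappa$, which rearranges to the lemma. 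Your computation of a ``weaker bound $\kappa\ge\delta+1-\ti\alpha(G)/2$'' from the two-sided configuration rests on the same misreading of the definition: correctly interpreted, having both sides of size $\delta+1-\kappa$ certifies all splits of $\delta+2-\kappa$, and nothing requires you to add the two side sizes together.
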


We show an intrinsic relationship between $\kappa(G)$ and $\ti\alpha(G)$ using the following propositions.

\begin{prop}\label{prop: biphole}
    Let $G$ be a graph on $n$ vertices with minimum degree $\delta$. Then $$\ti\alpha(G)  =n - \max_{s \ge 1} \min_{|S| = s} |N(S)|=n - \max_{1 \le s \le n- \delta} \min_{|S| = s} |N(S)|.$$
\end{prop}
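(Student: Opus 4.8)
The plan is to unwind the definition of $\ti\alpha(G)$ into a statement about neighbourhood sizes. For $1 \le s \le n$ write $\mu(s) := \min_{|S| = s}|N(S)|$, where $N(S)$ denotes the set of vertices outside $S$ that have a neighbour in $S$; the right-hand side of the displayed identity is then $n - \max_{s}\mu(s)$. The crucial elementary fact I would record first is a characterisation of bipartite holes: if $|S| = s$, then a set $T$ forming a bipartite hole with $S$ must be disjoint from both $S$ and $N(S)$, and since $S$ and $N(S)$ are disjoint, the number of vertices available for such a $T$ is exactly $n - s - |N(S)|$. Consequently, for positive integers $s,t$, the graph $G$ has an $(s,t)$-bipartite hole if and only if $\mu(s) \le n - s - t$; the trivial holes with a zero part always exist for total at most $n$. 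Recall that $\ti\alpha(G)$ is the largest $r$ such that every splitting $r = a + b$ yields an $(a,b)$-bipartite hole.

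For the first equality, set $m := \max_{1 \le s \le n}\mu(s)$, attained at some $s^*$. Because $N(S^*) \subseteq V(G) \setminus S^*$ for any $S^*$ of size $s^*$, we have $m = \mu(s^*) \le n - s^*$, hence $s^* \le n - m$. Now I would argue in two directions. If $r = n-m$ and $a,b$ are positive with $a + b = r$, then $\mu(a) \le m = n - (a+b) = n - a - b$, so $G$ has an $(a,b)$-bipartite hole by the characterisation; thus $\ti\alpha(G) \ge n - m$. Conversely, if $r > n - m$, i.e.\ $r \ge n - m + 1$, then $s^* \le n - m < r$, so $(s^*, r - s^*)$ is a splitting into two positive parts, yet $\mu(s^*) = m > m - 1 \ge n - r = n - s^* - (r - s^*)$, so this splitting gives no bipartite hole; hence no $r > n - m$ works, and $\ti\alpha(G) = n - m$.

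For the second equality it suffices to show the maximum defining $m$ is already attained for some $s \le n - \delta$. This follows from the trivial bound $\mu(s) \le n - s$: if $s \ge n - \delta + 1$ then $\mu(s) \le n - s \le \delta - 1 < \delta = \mu(1)$, the last equality because $\mu(1) = \min_{v \in V(G)} d_G(v) = \delta$. So values of $s$ exceeding $n - \delta$ never attain the maximum, and $\max_{s \ge 1}\mu(s) = \max_{1 \le s \le n - \delta}\mu(s)$, which is the second equality.

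I do not anticipate a real obstacle: the argument is essentially bookkeeping once the right reformulation is in place. The points needing care are adopting the convention that $N(S)$ excludes $S$ (so that the count $n - s - |N(S)|$ is exact), verifying the inequality $s^* \le n - m$ so that the converse step produces a legitimate two-part splitting, and checking the degenerate cases — splittings with a zero part, and graphs with $n - m = 1$ (such as complete graphs) where one must confirm the characterisation still forbids all larger $r$. If one prefers the ``minimum $k$'' phrasing of $\ti\alpha$, a short monotonicity remark — deleting a vertex from one side of a bipartite hole yields another bipartite hole — reconciles it with the ``largest $r$'' phrasing used above.
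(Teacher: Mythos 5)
Your proof is correct and follows essentially the same route as the paper's: both reformulate the existence of an $(s,t)$-bipartite-hole as the inequality $\min_{|S|=s}|N(S)| \le n-s-t$ (the paper via $\ti\alpha_s = \max_{|S|=s}(n-s-|N(S)|)$, you via $\mu(s)$), and both restrict the optimization to $s \le n-\delta$ by comparing against $\mu(1)=\delta$. The only cosmetic difference is that you work with the ``largest $r$'' formulation of $\ti\alpha$ while the paper minimizes $s+\ti\alpha_s$ directly, which you correctly note is reconciled by the monotonicity of bipartite holes under shrinking one side.
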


\begin{prop}\label{prop: connectivity}
    Let $G$ be a graph on $n$ vertices with minimum degree $\delta$. Then $$\kappa(G) = \min\limits_{1 \le s \le n-\delta} \min\limits_{|S| = s} |N(S)|.$$
\end{prop}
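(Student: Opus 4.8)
The plan is to prove the two inequalities $\kappa(G)\le R$ and $\kappa(G)\ge R$ separately, where $R:=\min_{1\le s\le n-\delta}\min_{|S|=s}|N(S)|$ and, as usual, $N(S)=\{v\in V(G)\setminus S:v\text{ has a neighbour in }S\}$. The whole argument would rest on one elementary dictionary between neighbourhoods and separators: (i) if $\emptyset\ne S\subseteq V(G)$ and $S\cup N(S)\ne V(G)$, then $N(S)$ separates $S$ from the nonempty set $V(G)\setminus(S\cup N(S))$, so $|N(S)|\ge\kappa(G)$ (read as $\kappa(G)=0$ in the degenerate case $N(S)=\emptyset$); and (ii) conversely, if $G$ is not complete and $C$ is a minimum vertex cut, then for any component $H$ of $G-C$ we have $N(V(H))\subseteq C$, and since $V(H)\cup N(V(H))\subseteq V(H)\cup C\subsetneq V(G)$, part (i) shows that $N(V(H))$ is itself a separator, whence $N(V(H))=C$ by minimality of $|C|$.

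For $\kappa(G)\ge R$, I would take an arbitrary $S$ with $1\le|S|\le n-\delta$ and argue $|N(S)|\ge\kappa(G)$: if $S\cup N(S)=V(G)$ then $|N(S)|=n-|S|\ge n-(n-\delta)=\delta\ge\kappa(G)$ by Whitney's inequality $\kappa(G)\le\delta(G)$, and otherwise (i) applies directly. Taking the minimum over all such $S$ gives $\kappa(G)\ge R$.

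For $\kappa(G)\le R$, if $G=K_n$ then $\delta=n-1$, the only admissible value is $s=1$, and $\min_{|S|=1}|N(S)|=n-1=\kappa(K_n)$. If $G$ is not complete, I would take a minimum vertex cut $C$, pick two distinct components $H_1,H_2$ of $G-C$, and set $S:=V(H_1)$; by (ii) then $|N(S)|=|C|=\kappa(G)$ and $|S|\ge1$. It then remains to check that $|S|\le n-\delta$, so that $S$ is a legitimate witness for $R$: fixing any $w\in V(H_2)$, all of its neighbours lie in $V(H_2)\cup C\subseteq V(G)\setminus S$ while $w\notin N(w)$, so $\delta\le d(w)\le n-|S|-1$, i.e.\ $|S|\le n-\delta-1\le n-\delta$. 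Hence $R\le\kappa(G)$, and combining the two bounds finishes the proof.

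The routine parts should be genuinely routine; the step I would flag as the \emph{crux} is controlling the size of the extremal set — both the observation $|N(S)|=n-|S|\ge\delta$ when $S\cup N(S)=V(G)$, and the bound $|S|\le n-\delta$ for the witnessing component. Both encode the same fact: any vertex lying on the far side of a separator has all of its $\ge\delta$ neighbours confined to that side. This is also exactly why the correct cut-off is $s\le n-\delta$ rather than $s\le n$ — the latter would wrongly admit $S=V(G)$ with $N(S)=\emptyset$. The complete-graph and disconnected-graph cases are dispatched as degenerate instances as indicated above.
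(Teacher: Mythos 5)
Your proof is correct and takes essentially the same approach as the paper: a minimum cut together with one of its components furnishes the witness set (with a vertex on the far side forcing $|S|\le n-\delta$), while the neighbourhood/separator dictionary, split according to whether $S\cup N(S)=V(G)$, gives the reverse bound --- your uniform dichotomy here is if anything slightly cleaner than the paper's case analysis of where the minimum is attained and its separate treatment of $|S|=n-\delta$. The only blemish is that the inequality labels heading your two paragraphs are transposed: the first argument shows every admissible $S$ has $|N(S)|\ge\kappa(G)$ and hence yields $R\ge\kappa(G)$, not $\kappa(G)\ge R$, while the witness argument yields $R\le\kappa(G)$ exactly as your closing sentence states; the two bounds you actually establish do combine to give the claimed equality.
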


Our formulas provide a proof of the following, which was also observed (without proof) by McDiarmid and Yolov \cite[p.~280]{MY}.

\begin{corollary}
    For any graph $G$ on $n$ vertices, $\kappa(G) + \ti\alpha(G) \le n$.
\end{corollary}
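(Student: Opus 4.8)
The plan is to obtain the bound immediately by adding the two exact formulas just proved. Fix $G$ on $n$ vertices, put $\delta=\delta(G)$, and for each integer $s$ with $1\le s\le n-\delta$ write $f(s)=\min_{|S|=s}|N(S)|$, the least neighbourhood size over all $s$-element vertex sets. Proposition~\ref{prop: connectivity} states that $\kappa(G)=\min_{1\le s\le n-\delta}f(s)$, while Proposition~\ref{prop: biphole} states that $\ti\alpha(G)=n-\max_{1\le s\le n-\delta}f(s)$; crucially, both quantities are controlled by the \emph{same} function $f$ over the \emph{same} index range.

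Adding these identities gives
$$\kappa(G)+\ti\alpha(G)=n+\Bigl(\min_{1\le s\le n-\delta}f(s)-\max_{1\le s\le n-\delta}f(s)\Bigr).$$
The index set $\{1,2,\dots,n-\delta\}$ is nonempty because a simple graph on $n$ vertices has $\delta\le n-1$, hence $n-\delta\ge 1$; and on a nonempty finite set the minimum of a function is at most its maximum. Thus the parenthesised term is at most $0$, and $\kappa(G)+\ti\alpha(G)\le n$, as desired.

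There is no real obstacle in this argument — it is a one-line deduction from Propositions~\ref{prop: biphole} and \ref{prop: connectivity}. The only point needing care is the (trivial) verification that the common optimisation range $1\le s\le n-\delta$ is nonempty, which is precisely why that cutoff was incorporated into the two propositions in the first place. As a bonus, one reads off from the displayed identity that equality $\kappa(G)+\ti\alpha(G)=n$ holds exactly when $f$ is constant on $\{1,\dots,n-\delta\}$; this occurs for instance for every complete graph and every cycle, but fails in general, as $K_a\cup K_a$ shows.
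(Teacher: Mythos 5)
Your proof is correct and is exactly the argument the paper intends: the corollary is stated as an immediate consequence of Propositions~\ref{prop: biphole} and~\ref{prop: connectivity}, which express $\ti\alpha(G)$ and $\kappa(G)$ as $n$ minus the maximum, and the minimum, of the same function $f(s)=\min_{|S|=s}|N(S)|$ over the same nonempty range $1\le s\le n-\delta$. Your added observation characterizing equality (with the $K_a\cup K_a$ non-example) is a correct bonus not present in the paper.
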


Throughout the paper, if a path or cycle is clear from the context and is denoted by $v_1,v_2,\dots,v_\ell$, then for any vertex $v = v_i$ we define $v^+ = v_{i+1}$ and $v^- = v_{i-1}$, where the indices are modulo $\ell$ if the underlying graph is a cycle.
For a vertex subset $S$, we define $S^+ = \{v^+: v \in S\}$ and $S^- =\{v^- : v \in S\}$.

Given two vertex subsets $A,B$ of a graph $G$ that are not necessarily disjoint, we use $E_G[A,B]$ to denote the set of edges with one end in $A$ and the other end in $B$, or simply $E[A,B]$ if the underlying graph is clear from the context.

\section{A degree sum result for Hamilton cycles}

\begin{proof}[Proof of Theorem~\ref{sigma2}]
    Let $G$ be a non-Hamiltonian graph such that $\sigma_2(G) \ge 2 \ti\alpha(G) - 1$ with maximum number of edges.
    Since $\sigma_2$ is non-decreasing and $\ti\alpha$ is non-increasing when adding edges, the condition $\sigma_2 \geq 2 \ti \alpha - 1$ holds when adding edges to $G$.
    Therefore, $G$ contains a Hamilton path.
    Let $P = v_1 v_2 \dots v_n$ be a Hamilton path of $G$ such that $d(v_1) \le d(v_n)$, $d(v_1)$ is maximum possible, and subject to that, $d(v_n)$ is maximized.
    Note that $v_n \in X$, i.e., $d(v_n) \ge \ti\alpha(G)$, otherwise $v_1,v_n \in Y$ and $P + v_1v_n$ is a Hamilton cycle.

    Now we partition the vertex set into two parts.
    Let $X =\{ v \,:\, d(v) \ge \ti\alpha(G)\}$ and $Y = \{ v \;:\, d(v) \le \ti\alpha(G) - 1\}$.
    Note that given $\sigma_2 \geq 2 \ti\alpha - 1$, $G[Y]$ is a clique.
    
    \paragraph{Case 1.}
    $v_1 \in X$, i.e., $d(v_n) \ge d(v_1) \ge \ti\alpha$.

    Let $\ti\alpha + 1 = s + t$, where $s \le t$ are positive integers such that $G$ contains no $(s,t)$-bipartite holes.
    Let $k$ be the minimum index such that $\{v_1,\dots,v_k\}$ contains exactly $s$ neighbors of $v_1$, then $v_k \in N(v_1)$.
    
    Let $S_1 = (P[v_1,v_k] \cap N(v_1))^-$, which has size $s$, and $T_1 = (P[v_k,v_n] \cap N(v_n))^+$.
    If there exists $v_iv_j \in E(G)$ for $v_i \in S_1$ and $v_j \in T_1$, there is a Hamilton cycle $P[v_1,v_i] + v_iv_j + P[v_j,v_n] + v_nv_{j-1} + P[v_{j-1},v_{i+1}] + v_{i+1}v_1$.
    Therefore, there is no edge between $S_1$ and $T_1$, and $|T_1| < t$, otherwise there is an $(s,t)$-bipartite-hole.

    Let $S_2 = (P[v_k,v_n] \cap N(v_1))^+$, which has size at least $t$, and $T_2 = (P[v_1,v_k) \cap N(v_n))^+$.
    If there exists $v_s v_t \in E(G)$ for $v_s \in S_2$ and $v_t \in T_2$, there is a Hamilton cycle $P[v_1,v_{t-1}] + v_{t-1}v_n + P[v_n,v_s] + v_sv_t + P[v_t,v_{s-1}] + v_{s-1}v_1$.
    Therefore, there is no edge between $S_2$ and $T_2$, and thus $|T_2| < s$, otherwise there is an $(s,t)$-bipartite-hole.

    We deduce a contradiction since $\ti\alpha \le d_G(v_n) =  |T_1| + |T_2| \le s - 1 + t - 1 = \ti \alpha - 1$.

    \paragraph{Case 2.} $v_1 \in Y$, i.e., $d_G(v_1) \le \ti\alpha - 1$.
    
    For each $v_i \in N(v_1)$, note that the path $Q = v_{i-1} \dots v_2 v_1 v_i v_{i+1} \dots v_n$ is a Hamilton path.
    By the maximality of $d_G(v_1)$, $d_G(v_{i-1}) \le d_G(v_1) \le \ti\alpha(G) - 1$.
    Hence, $v_{i-1} \in Y$ and since $G[Y]$ is a clique, $v_{i-1} \in N(v_1)$.
    Therefore, $N(v_1)= \{v_2,v_3,\dots,v_{d+1}\}$, where $d = d_G(v_1)$, and $\{v_1,v_2,\dots,v_d\} \subseteq Y \subseteq N[v_1] =\{v_1,v_2,\dots,v_{d+1}\}$.

    {\em Case 2.1.} $v_{d+1}$ cannot separate $\{v_1,v_2,\dots,v_d\}$ and $\{v_{d+2},\dots,v_n\}$.
    
    There exists $v_iv_j \in E(G)$, where $2 \le i \le d$ and $d+2 \le j \le n$.
    Consider the Hamilton path $R = v_{j-1}\dots v_{d+1} \dots v_{i+1} v_1 \dots v_i v_j  \dots v_n$.
    Since $R$ has two ends $v_{j-1}$ and $v_n$, then $d(v_{j-1}) \le d(v_1) < \ti\alpha$ by the maximality of $d(v_1)$.
    Hence, $v_{j-1} \in Y \subseteq \{v_1,v_2,\dots,v_{k+1}\}$, and thus $j\le d+2$.
    Therefore, $j = d+2$.
    Note that $v_{d+1}v_d \dots v_{i+1} v_1 v_2 \dots v_i v_{d+2} v_{d+3} \dots v_n$ is a Hamilton path with ends $v_{d+1}$ and $v_n$.
    By the maximality of $d(v_1)$, $d(v_{d+1}) \le d(v_1) = d$.
    Since $\{v_1,v_2,\dots,v_d\} \cup \{v_{d+2}\} \subseteq N(v_{d+1})$, $d(v_{d+1}) \ge d+1$, a contradiction.

    {\em Case 2.2.} $v_{d+1}$ separates $\{v_1,\dots,v_d\}$ and $\{v_{d+2},\dots,v_n\}$.

    Since $v_{d+1}$ is still a cut vertex and thus $G$ is not Hamiltonian whenever we add an edge between any two vertices $v_i,v_j$ for $i,j \le d+1$ or $i,j \ge d+1$,
    by the maximality of $|E(G)|$, both sets $\{v_1,\dots,v_d,v_{d+1}\}$ and $\{v_{d+1},\dots,v_n\}$ induce cliques, and hence, $d(v_n) = n-d-1 \ge \ti\alpha(G) \ge d+1$, $n \ge 2d + 2$, and $\sigma_2(G) = d + (n-d-1) = n - 1$.
    To determine $\ti\alpha(G)$, for every pair of vertex sets $S,T$ such that $(S,T)$ is $(s,t)$-bipartite-hole of $G$, without loss of generality, we have $S\subseteq \{v_1,\dots,v_d\}$ and $T \subseteq \{v_{d+2},\dots,v_n\}$.
    Therefore, $G$ has no $(1,n-d)$-bipartite-hole or $(d+1,d+1)$-hole, and $G$ contains $(s,t)$-bipartite hole for any $s + t \le \min\{n-d,2d+1\}$, which implies that
    $\ti\alpha(G) = \min\{n-d, 2d + 1\}$.
    If $\ti\alpha(G) = n-d$, then $\sigma_2(G) = n-1 \ge 2 \ti\alpha(G) - 1 = 2n-2d-1$ and thus $2d \ge n \ge 2d + 2$, a contradiction.
    Therefore, $\ti\alpha(G) = 2d+1$ and $\sigma_2(G) \ge 2 \ti\alpha(G) - 1$ implies that $n - 1 \ge 4d+1$, that is, $d \le \frac{n-2}4$.
    
    \paragraph{}In conclusion, for every $n$-vertex non-Hamiltonian $G$ such that $\sigma_2(G) \ge 2 \ti\alpha(G) - 1$, $G$ is a subgraph of $K_k \oplus_1 K_{n+1-k}$ for some $k \le \frac{n+2}4$.
\end{proof}

\section{Cyclability and triangles}

\begin{proof}[Proof of Theorem~\ref{Xcycle}]
    First, $X$ belong to a block of $G$.
    Indeed, if there are two vertices $x,y \in X$ in different blocks, and a vertex $v \neq x,y$ separates $x$ and $y$, then there is no edge between $N[x]\setminus \{v\}$ and $N[y] \setminus \{v\}$.
    Since $|N[x] \setminus \{v\} | = d_G(x) \geq \ti\alpha(G)$ and $|N[y] \setminus \{v\}| \ge \ti\alpha(G)$, there is an $(s,t)$-bipartite-hole for any positive integers $s,t$ with $s + t = \ti\alpha(G) + 1$.
    Therefore, $X$ belong to a block $B$ of $G$.

    Let $C$ be a cycle of $G$ such that $C$ is $X$-longest and subject to it, $C$ is longest.
    Then, there is no cycle $C'$ such that $V(C) \subsetneq V(C')$.
    Assume that $X \not\subseteq V(C)$.
    Let $H$ be a component of $G - V(C)$ intersecting $X$, and $T = N_G(H) \subseteq V(C)$.
    Consider $T^+$.
    
    {\bf Claim 1.}
    $E[T^+, H] = \emptyset$. That is, $T \cap T^+ = \emptyset$.
    
    {\em Proof.}
    If not, there are two consecutive vertices $v,v^+$ on $C$ such that $v,v^+ \in T$.
    Let $u \in H \cap N(v)$ and $w \in H  \cap N(v^+)$.
    Since $H$ is connected, there is a $uw$-path $P$ in $H$, then there is a longer cycle $C[v^+,v] + vu + P + wv^+$ that contains $V(C)$, a contradiction.
    \qed
    
    {\bf Claim 2.}
    $E[T^+, T^+] = \emptyset$. That is, $G[T^+]$ is empty.
    
    {\em Proof.}
    If not, there are two vertices $u,v \in T$ such that $u^+ v^+ \in E(G)$.
    Let $y \in H \cap N(u)$ and $z \in H \cap N(v)$.
    Since $H$ is connected, there is a $yz$-path $P$ in $H$, then there is a longer cycle $C[u^+,v] + vz + P + C[v^+,u] + u^+v^+$ that contains $V(C)$, a contradiction.
    \qed

    Let $s \le t$ be positive integers such that $s + t = \ti\alpha(G)+ 1$ and $G$ contains no $(s,t)$-bipartite-holes.
    Since $H \cap X \neq \emptyset$, let $x \in H \cap X$. 
    Then $N[x] \subseteq H \cup T$, $|H \cup T^+| = |H \cup T| \geq d_G(x) +1 \ge \ti\alpha(G) + 1= s + t$.
    By Claims 1 and 2, $|T| = |T^+| < s$, otherwise, let $A$ be a subset of $T^+$ of size $s$, and $B$ be a subset of $(H \cup T^+) \setminus A$ of size $t$, $(A,B)$ is an $(s,t)$-bipartite-hole, a contradiction.
    Therefore, $|H| \ge s + t - |T| \geq t + 1$.

    Since $H$ is a component of $G - V(C)$ and $T= N(H) \cap V(C)$, $E[H,G \setminus (H \cup T)] = \emptyset$ and thus $|G \setminus (H \cup T)| < s$, otherwise $(G \setminus (H \cup T), H)$ is a $(s',t')$-bipartite-hole with $s' \ge s$ and $t' \ge t$.
    Therefore, $|G \setminus H| \le s-1 + |T| \le 2s - 2 \le s + t - 2 = \ti\alpha(G) - 1 < \ti\alpha(G)$.
    Since for any $v \in G \setminus (H \cup T)$, $N[v] \subseteq G \setminus H$, then $d_G(v) < \ti\alpha(G)$ and $v \in Y$, where $Y = V(G) \setminus X = \{y \in V(G) \,:\, d_G(y) < \ti\alpha(G)\}$.
    Hence, $X \subseteq H \cup T$ and for any $x \in T \cap X$, $|N(x) \cap H| \ge d_G(x) - (|G \setminus H| - 1) \ge \ti\alpha(G) - (\ti\alpha(G) - 2) = 2$.
    
    Let $x_0 \in H \cap X$, and $X \cap C = \{x_1,x_2,\dots,x_k\}$, where $x_1,\dots,x_k$ occur in the order of $C$.
    Since $X$ belong to a block of $G$, there is a cycle containing every pair of vertices in $X$, hence we may assume $k \ge 2$.

    Let $B_1 ,\dots,B_m$ be the blocks of $H$ and $x_0 \in B_1$.
    Note that $B_1 \subseteq B$ since all vertices of $X$ belong to the same block $B$ and $x_0 \in B_1 \cap X$.
    Let $B_1^\circ$ be the set of vertices in $B_1$ that are not cut vertices of $H$.
    Let $D_1,\dots,D_l$ be the components of $H \setminus B_1^\circ $, and $v_1,\dots,v_j$ be the vertices of $B_1^\circ$.
    Hence, $\{D_1,\dots,D_l,\{v_1\},\dots,\{v_j\}\}$ is a partition of $H$. 
    Note that there exists a $uv$-path through $x_0$ between any two vertices $u,v$ in different parts.

    {\bf Case 1.} The vertices of $\cup_{i=1}^k N(x_i) \cap H$ do not belong to a same part of $H$.

    There exists an index $i$ (modulo $k$) such that there exist $a_i \in N(x_i) \cap H$ and $a_{i+1} \in N(x_{i+1}) \cap H$ in different partitions.
    Let $P$ be an $a_ia_{i+1}$-path containing $x_0$, we obtain an $X$-longer cycle $a_{i+1}x_{i+1} + C[x_{i+1}, x_i] + x_i a_i + P$, a contradiction.

    {\bf Case 2.} The vertices of $\cup_{i=1}^k N(x_i) \cap H$ belong to the same part of $H$.

    Since $|N(x) \cap H | \ge 2$ for each $x \in X \cap T$, we may assume that all the vertices of $\cup_{i=1}^k N(x_i) \cap H$ belong to the component $D_1$.
    Let $v_1 \in B_1 \cap D_1$ be the cut vertex that separates $B_1$ and $D_1$.
    Since $B_1 - v_1 \subseteq B$ and $G - V(H)$, which contains $V(C)$, intersects $B$, by the $2$-connectivity of $B$, $v_1$ cannot separate $B_1 - v_1$ and $G-V(H)$, and there is an edge $vw$ where $v \in H \setminus V(D_1)$ and $w \in T = N_G(H) \subseteq V(C)$.
    Suppose $w \in V(C[x_i, x_{i+1}])$.
    Let $a_i \in N(x_i) \cap H \subseteq D_1$. Since $a_i$ and $v$ belong to different parts of $H$, there is an $a_iv$-path $P$ in $H$ through $x_0$.
    Therefore, $P + vw + C[w,x_i] + x_ia_i$ is cycle that contains $X \cap V(C)$ and $x_0$, which is $X$-longer, a contradiction.
\end{proof}

\begin{proof}[Proof of Theorem~\ref{triangle or bip}]
    Suppose for the sake of contradiction that $G$ is non-bipartite and triangle-free. 
    Then $G$ contains a cycle of odd length at least $5$ that is an induced subgraph.
    
    Let $X = \{v\,:\, d_G(v) \ge \ti\alpha(G)\}$ and $Y = \{v \,:\, d_G(v) \le \ti\alpha(G) - 1\}$.
    Since $d_G(u) + d_G(v) \le 2\ti\alpha(G) - 2 < \sigma_2$ for any two vertices $u,v \in Y$, $G[Y]$ is a clique.
    Hence, $|Y| \le 2$, that is, all but at most two vertices have degree at least $\ti\alpha(G)$.

    Let $l \ge 5$ be the minimum odd number such that $G$ contains an induced cycle $C_l$ of length $l$.

    {\bf Claim 3.} There do not exist $x,y,z \in X \cap V(C_l)$ such that $yz \in E(G)$ and $xy,xz \notin E(G)$.

    {\em Proof.}
    Let $x,y,z \in X \cap V(C_l)$ be such that $yz \in E(G)$ and $xy,xz \notin E(G)$.
    Suppose that $s \le t$ are two positive integers such that $s + t = \ti\alpha(G) + 1$ and $G$ contains no $(s,t)$-bipartite-hole.
    Since $G$ is triangle-free, for any vertex $v \in G$, $N(v)$ induces an empty graph.

    Note that $|N(y) \cup \{x\}| \ge d_G(y) + 1 \ge \ti\alpha(G) + 1 = s + t$.
    If $|N(y) \cap N(x)| < t$, then let $T$ be a subset of $N(y) \cup \{x\}$ of size $t$ that contains $(N(y) \cap N(x)) \cup \{x\}$ and $S$ be a subset of $(N(y) \cup \{x\}) \setminus T$ of size $s$.
    Then, $(S,T)$ is an $(s,t)$-bipartite-hole in $G$, a contradiction.
    Therefore, $|N(y) \cap N(x)| \ge t$ and similarly, $|N(z) \cap N(x)| \ge t$.

    Since $yz \in E(G)$, we have $N(y) \cap N(z) = \emptyset$, otherwise, for any common neighbor $v$ of $y$ and $z$, $y,z,v$ is a triangle.
    Therefore, $d_G(x) \ge |N(x) \cap N(y)| + |N(x) \cap N(z)| \ge t + t \ge s + t$.
    For two disjoint subsets $S,T$ of $N(x)$ of sizes $s$ and $t$ respectively, $(S,T)$ is an $(s,t)$-bipartite-hole, a contradiction.
    \qed
    
    If $Y \cap V(C_l) = \emptyset$, there exist $x,y,z \in X \cap V(C_l)$ with $yz \in E(G)$ and $xy, xz \notin E(G)$, which is a contradiction.
    If $Y \cap V(C_l) \neq \emptyset$ and $l >5$, then for a vertex $x \in V(C_l) \cap X$ that is adjacent to a vertex in $Y \cap V(C_l)$, there are at least four vertices on $C_l$ that are not adjacent to $x$, where there are at least two adjacent vertices $y,z$ in $X$.
    Therefore, $l = 5$.
    
    For every induced cycle $C_5$ of length $5$, $C_5$ has two consecutive vertices in $Y$, otherwise there exist $x,y,z \in X \cap V(C_5)$ as the Claim states.
    Let $C_5 = a,b,c,y_1,y_2,a$ where $a,b,c \in X$ and $y_1,y_1 \in Y$.
    Since $G$ is triangle-free, $N(a) \cap N(b) = N(b) \cap N(c) = \emptyset$.
    Note that $N(a)\setminus V(C_5)$ and $N(c) \setminus V(C_5)$ are contained in $X$.
    If there exists an edge between $N(a) \setminus V(C_5)$ and $N(c) \setminus V(C_5)$, then we can find two vertices $x_1,x_2\in X$ such that $a,b,c,x_1,x_2$ is a cycle $C_5'$ of length $5$ and thus it is induced.
    Then, there exist $x,y,z$ in $C_5'$ as the Claim states, a contradiction.
    Therefore, $E[N(a) \setminus V(C_5), N(c) \setminus V(C_5)] = \emptyset$, and thus $\min \{|N(a) \setminus V(C_5)|, |N(c) \setminus V(C_5) |\} <t$, otherwise, we can find an $(s,t)$-bipartite-hole by picking $s$ vertices from $N(a) \setminus V(C_5)$ and $t$ vertices from $N(c) \setminus V(C_5)$. 
    Since $d_G(a), d_G(c) \ge \ti\alpha(G)$, then $s + t -1 = \ti\alpha(G) - 2 \le \min \{|N(a) \setminus V(C_5)|, |N(c) \setminus V(C_5)|\} \le t-1$, and thus $s \le 0$, a contradiction.

    Therefore, we conclude that either $G$ is bipartite or $G$ contains a triangle.

    If $G$ is bipartite, suppose that $V(G) = A \sqcup B$ is a bipartition of $G$, where $|A| = a$, $|B| = b$ and $a \le b$.
    Since $B$ induces an empty graph, we have $\ti\alpha(G) \ge \alpha(G) \ge b$ and thus $\sigma_2(G) \ge 2b -1$.
    For two distinct vertices $u,v \in B$, $2a \ge d_G(u) + d_G(v) \ge \sigma_2 \ge 2b - 1$.
    Hence, $a = b$.
    If there exists $u \in A$ and $v \in B$ that are non-adjacent, then $d_G(u) + d_G(v) \le (a-1) + (b-1) = 2b-2 < \sigma_2(G)$, a contradiction.
    Therefore, $G$ is isomorphic to $K_{a,a}$.
\end{proof}

\section{Bipartite-hole-number and connectivity}

In this section, we prove Propositions~\ref{prop: biphole} and~\ref{prop: connectivity}.

\begin{proof}[Proof of Proposition~\ref{prop: biphole}]
    For a vertex subset $S \subseteq V(G)$ with $|S| = s$, let $T = V(G) \setminus N[S]$, then $|T| = n - s - |N(S)|$ and $(S,T)$ is an $(s,t)$-bipartite-hole.
    Let $\ti \alpha_s : = \max\limits_{|S| = s} (n - s - |N(S)|)$. Then $G$ contains an $(s,\ti\alpha_s)$-bipartite-hole and no $(s,\ti\alpha_s+1)$-bipartite-hole.
    Hence $\ti\alpha(G) \le s + \ti\alpha_s$ for all $s \ge 1$.
    Taking the minimum, we obtain
    $$ \ti\alpha(G) = \min_{s \ge 1}\; (s + \ti\alpha_s).$$
    For $s \ge n - \delta(G) + 1$, $s + \ti\alpha_s \ge n-\delta(G) + 1 > 1 + \ti\alpha_1 = \max\limits_{v \in V(G)} (n - |N(v)|) = n - \delta(G)$.
    Therefore, the minimum is only achieved by integers $s \le n - \delta(G)$.  Thus,
    $$\ti\alpha(G) = \min_{s \ge 1}\; (s + \ti\alpha_s) = \min_{s \ge 1} \max_{|S| = s} (n - |N(S)|) = n - \max_{s \ge 1} \min_{|S| = s} |N(S)| = n - \max_{1 \le s \le n- \delta} \min_{|S| = s} |N(S)|.$$
\end{proof}

\begin{proof}[Proof of Theorem~\ref{prop: connectivity}]
    If $G$ is complete, then $n-\delta = 1$ and thus $\min\limits_{1 \le s \le n-\delta}\min\limits_{|S| = s} |N(S)| = \min\limits_{v \in V(G)} |N(v)| = n-1 = \kappa(G)$. Hence, we may assume that $G$ is non-complete.
    
    Let $A$ be a minimum separating set of $G$.
    Then $|A| = \kappa(G)$.
    Let $S_0$ be a component of $G \setminus A$.
    Then $N(S_0) = A$, otherwise, $N(S_0) \subsetneq A$ is a smaller separating set of $G$.
    Since $S_0$ is separated by $A$ from at least one vertex $v$, then $S_0 \cap N[v] = \emptyset$, thus $|S_0| \le n - |N[v]| < n - \delta$.
    Therefore, $\kappa(G) = |A| = |N(S_0)| \ge \min\limits_{1 \le s < n-\delta} \min\limits_{|S| = s} |N(S)|$.

    If $\min\limits_{1 \le s < n - \delta} \min\limits_{|S| = s} |N(S)|$ is achieved by a subset $S_1$ such that $|N(S_1)| < n - |S_1|$, then $N(S_1)$ is a vertex subset that separates $S_1$ and $V(G) \setminus (S_1 \cup N(S_1))$ and thus $\min\limits_{1 \le s < n-\delta} \min\limits_{|S| = s} |N(S)| = |N(S_1)| \ge \kappa(G)$.

    If $\min\limits_{1 \le s < n-\delta} \min\limits_{|S| = s} |N(S)|$ is only achieved by subsets $S_2$ such that $|N(S_2)| \ge n - |S_2|$, then for some such $S_2$ we have
    $$\min\limits_{1 \le s < n-\delta} \min\limits_{|S| = s} |N(S)| = |N(S_2)| \ge n- |S_2| \ge n - (n-\delta-1) =  \delta+1.$$
    But if $S=\{v\}$ where $v$ is a vertex of degree $\delta$, then we have $|N(S)| = \delta$, contradicting the minimum being at least $\delta+1$.  So this situation cannot happen.

    Now, suppose that $|S| = n-\delta$.  There is no $v \in V(G)\setminus(S \cup N(S))$, because then $N[v]$ would be disjoint from $S$, giving $|N[v] \cup S| \ge (\delta+1) + (n-\delta) = n+1$, which is impossible. Thus, $S \cup N(S) = V(G)$ and hence $|N(S)| = n - |S| = \delta \ge \kappa(G)$.
    Therefore, $\kappa(G) = \min\limits_{1 \le s < n-\delta} \min\limits_{|S| = s} |N(S)| = \min\limits_{1 \le s \le n-\delta} \min\limits_{|S| = s} |N(S)|$.
\end{proof}

By Proposition~\ref{prop: biphole}, we can give a complete description of the graphs with $\ti\alpha = n$.

\begin{prop}\label{prop: bh=n}
    A graph $G$ on $n$ vertices has $\ti\alpha(G) = n$ if and only if the sizes of all components of $G$, say $a_1\le a_2\le\dots\le a_r$, satisfy the inequality $a_{i} - 1 \le A_{i-1} := \sum\limits_{k=1}^{i-1}a_k$ for all $i = 1,\dots,r$, where $A_0= 0$. 
\end{prop}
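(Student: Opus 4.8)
The plan is to translate the condition $\ti\alpha(G) = n$, via Proposition~\ref{prop: biphole}, into a statement about subset sums of the multiset of component sizes, and then settle that statement by an easy induction. First, Proposition~\ref{prop: biphole} gives $\ti\alpha(G) = n - \max_{1 \le s \le n - \delta}\min_{|S| = s}|N(S)|$, so $\ti\alpha(G) = n$ if and only if $\min_{|S| = s}|N(S)| = 0$ for every $s$ with $1 \le s \le n - \delta$. Taking $s = 1$ shows that $\ti\alpha(G) = n$ forces $\delta(G) = 0$, so the range of $s$ is actually $1 \le s \le n$. Now $\min_{|S| = s}|N(S)| = 0$ means there is a set $S$ with $|S| = s$ and no edge joining $S$ to $V(G) \setminus S$, i.e.\ $S$ is a union of connected components of $G$; hence the attainable values of $|S|$ are exactly the subset sums $\sum_{k \in I} a_k$ over $I \subseteq \{1, \dots, r\}$. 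Since $0 = A_0$ and $n = A_r$ are always such sums, we conclude that $\ti\alpha(G) = n$ if and only if every integer in $[0, n]$ is a subset sum of the multiset $\{a_1, \dots, a_r\}$.

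It therefore remains to prove: for positive integers $a_1 \le a_2 \le \dots \le a_r$ with $A_r = n$, every integer in $[0, n]$ is a subset sum of $\{a_1, \dots, a_r\}$ if and only if $a_i - 1 \le A_{i-1}$ for all $i$. For the ``only if'' direction, suppose some index violates the inequality and let $i$ be the least such, so $a_i \ge A_{i-1} + 2$ and hence $s := A_{i-1} + 1$ satisfies $1 \le s \le n$; any subset summing to $s$ can use no part $a_j$ with $j \ge i$ (each is $\ge a_i > s$, using $a_1 \le \dots \le a_r$), so its sum is at most $A_{i-1} < s$, a contradiction. For the ``if'' direction, I would prove by induction on $i$ the stronger statement that every integer in $[0, A_i]$ is a subset sum of $\{a_1, \dots, a_i\}$: the case $i = 0$ is trivial, and in the inductive step an integer $m \in [0, A_{i-1}]$ is handled by induction, while an integer $m \in [A_{i-1} + 1, A_i]$ is written $m = a_i + (m - a_i)$ with $0 \le m - a_i \le A_{i-1}$, the left inequality being exactly $a_i \le A_{i-1} + 1$, so $m - a_i$ is a subset sum of $\{a_1, \dots, a_{i-1}\}$ by induction. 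Applying this with $i = r$ yields every integer in $[0, n]$ as a subset sum.

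I expect no real obstacle here; the only points needing a little care are confirming the interpretation of $N(S)$ (the neighbourhood outside $S$, consistent with the proof of Proposition~\ref{prop: biphole}) so that $|N(S)| = 0$ is equivalent to $S$ being a union of components, and making sure the hypothesis $a_1 \le \dots \le a_r$ is invoked exactly where it is needed — in the ``only if'' direction to guarantee that all of $a_i, \dots, a_r$ exceed $s$, and in the ``if'' direction so that the induction processes the parts in nondecreasing order.
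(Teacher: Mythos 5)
Your proposal is correct and follows essentially the same route as the paper: both translate $\ti\alpha(G)=n$ via Proposition~\ref{prop: biphole} into the statement that every $s\in\{1,\dots,n\}$ is realized by a union of components (i.e.\ is a subset sum of the $a_k$), and both establish the equivalence with $a_i-1\le A_{i-1}$ by a contrapositive argument in one direction and induction on $i$ in the other. Your write-up is slightly more explicit (e.g.\ noting that $\delta(G)=0$ so the range of $s$ is all of $[1,n]$, and spelling out the inductive step), but there is no substantive difference.
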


\begin{proof}
By Proposition~\ref{prop: biphole}, $\ti\alpha(G) = n$ if and only if $\max_{s \ge 1} \min_{|S| = s} |N(S)| = 0$.
Note that $|N(S)| = 0$ if and only if $S$ is a union of components of $G$.
Therefore for any $s \ge 1$, there exists a vertex subset $S$ of size $s$ such that $S$ is a union of components of $G$.
Let the sizes of components of $G$ be $a_1,a_2,\dots,a_r$ such that $a_1 \le a_2 \le \dots \le a_r$.
For any $s \ge 1$, there exists a subset $\{a_{i_1},\dots,a_{i_l}\}$ of $\{a_1,\dots,a_r\}$ such that $a_{i_1} + \dots + a_{i_l} = s$.
This holds for $\{a_1,\dots,a_r\}$ if and only if $a_{i} - 1 \le A_{i-1}$ for any $i = 1,\dots,r$.
Indeed, if $a_{i} - 2 \ge a_1 + \dots a_{i-1}$ for some $i$, then there is no set of size $a_i - 1$ such that it is a union of components.
Conversely, if $a_{i} - 1 \le a_1 + \dots + a_{i-1}$ for all $i = 1,\dots,r$, we have $a_1 \le 1$, and by induction on $i$, there is a subset $\{a_{i_1},\dots,a_{i_l}\}$ of $\{a_1,\dots,a_i\}$ such that $a_{i_1} + \dots + a_{i_l} = s$ for any $1 \le s \le A_i$.
\end{proof}

By Proposition~\ref{prop: bh=n}, a graph $G$ with $\ti\alpha(G) = n$ has at least $\lceil\log_2 (n+1) \rceil$ components.
Indeed, by induction on $i$, we have $a_i \le 2^{i-1}$ for every $i = 1,\dots,r$, hence, $n = \sum\limits_{i=1}^r a_i \le \sum\limits_{i=1}^r 2^{i-1} = 2^r - 1$.

\section{Conclusion}

As an analog of Dirac's Theorem involving the bipartite-hole-number $\ti\alpha(G)$, McDiarmid and Yolov's Theorem~\ref{MY} implies Dirac's Theorem.
However, our Theorem \ref{2-conn} turns out to be independent of Ore's Theorem.
There exist $2$-connected graphs with $n \le \sigma_2 \le 2 \ti\alpha - 2$, and there also exist $2$-connected graphs with $2\ti\alpha - 1 \le \sigma_2 \le n-1$.
Both Dirac's Theorem and Ore's Theorem are special cases of a more general result involving a closure operator due to Bondy and Chv\'{a}tal \cite{BondyChvatal}.  The \emph{closure $\cl(G)$} of an $n$-vertex graph $G$ is obtained by repeatedly adding edges between pairs of nonadjacent vertices whose degree sum is at least $n$.  Both Dirac's Theorem and Ore's Theorem, as well as some other results, are special cases of the fact that $G$ is Hamiltonian if $\cl(G)$ is complete.  We also show that there exist $2$-connected graphs with $\sigma_2 \ge 2\ti\alpha-1$ whose closures are not complete.  Thus, there are graphs where Theorem \ref{2-conn} applies, but no conditions derived from having a complete closure apply.

Let $G \vee H$ denote the join of two graphs, obtained by adding an edge between every vertex of $G$ and every vertex of $H$.  For example, $K_a \oplus_1 K_b$ can also be regarded as $(K_{a-1} \cup K_{b-1}) \vee K_1$.

\begin{example}
\label{ex: 2-sum}
Let $G = (K_{a-2} \cup K_{n-a}) \vee K_2$ for $\frac n4 + 2 \le a \le \frac n2$.
Then, $\sigma_2(G) = (a-1) + (n+1-a) = n$ and $\ti\alpha(G) = \ti\alpha(K_{a-2} \cup K_{n-a}) =  \min\{2a-3, n-a+1\}$.
Clearly $G$ is $2$-connected, and the restrictions on $a$ mean that $\ti\alpha(G) = 2a+3$ and $n = \sigma_2 \le 2 \ti\alpha(G) - 2$.
\end{example}

\begin{example}
    \label{ex: cycle clique join}
    Let $G = (C_p \cup K_q) \vee K_r$ for positive integers $p,q,r$ such that $p \ge 4$, $q \ge 2p$ and $2p-1 \le r \le p+q-5$.
We have $n = p + q + r$, $\sigma_2(G) = 2(r + 2) = 2r + 4$ and $\ti\alpha(G) = 2p+1$ since there are no $(p+1,p+1)$-bipartite-holes and $G$ contains an $(s,2p+1-s)$-bipartite-hole for any $1\le s\le p$ by using a vertex subset of $C_p$ of size $s$ and a vertex subset $K_q$ of size $2p+1-s \le 2p \le q$.
Therefore, $n-1 = p+q+r -1\ge 2r + 4 = \sigma_2(G) \ge 2\ti\alpha(G) = 4p+2$.
\end{example}

We present another infinite family of graphs that are $2$-connected, regular and satisfy $n-p = \sigma_2 \ge 2 \ti\alpha(G) - 1$ for an arbitrary odd positive integer $p=2q-1$, $q \ge 1$. The Bondy-Chv\'{a}tal closure of these graphs is not complete.

\begin{example}\label{ex: circulant}
    Let $q \ge 1$ be an integer and $k \ge 2q$ be an integer.
    Let $d = 2(q+1)k = 2qk + 2k$ and $n = 2d + 2q-1 = 4qk + 4k + 2q-1$.
We define a graph $G_{q,k}$ whose vertex set is the group $\mZ_n = \{0,1, \dots, n-1\}$ with addition modulo $n$.  For each $i = 0, 1, \dots, q$ let $U_i = \{2ik+1, 2ik+2, \dots, 2ik+k = (2i+1)k\}$, let $U = \bigcup_{i=0}^q U_i$, and let $xy \in E(G_{q,k})$ if and only if $y \in x \pm U$.

Then $G= G_{q,k}$ is a $2$-connected $d$-regular graph on $n = 2d+2q-1$ vertices, and hence $\sigma_2 = 2d = n - (2q-1)$.  Since $n > 2d$, $\cl(G) = G$, which is not complete.

Showing that $\sigma_2(G) = 2d \ge 2\ti\alpha(G) - 1$ is equivalent to showing that $\ti\alpha(G) \le d$.
To this end, we claim that for any two vertices $x$ and $y$, 
$|N[x] \cup N[y]| \ge d + 2q+1$.
Without loss of generality, we may assume that $x = 0$, so that $N[x] = (-U) \cup \{0\} \cup U$, and $y \in \{1,\dots,\frac{n-1}{2}\}$, where $\frac{n-1}{2} = 2qk + 2k + q -1$.

First suppose that $y \ge k+1$.  
Consider the set of $2k+1$ consecutive vertices $Y = \{y-k, y-k+1, \dots, y+k\} \subseteq N[y]$.
Since the largest element of $U$ is $(2q+1)k = 2qk+k$,
every $z \in Y+U$ satisfies $z \le (\frac{n-1}{2}+k)+(2qk+k) = (2qk+2k+q-1)+2qk+2k = 4qk + 4k + q-1 < n$, so that $0 \notin Y+U$ and $Y \cap (-U) = \emptyset$.  Also $0 \notin Y$, and hence $Y \cap N[x] \subseteq U$.
If $Y$ contains elements of some $U_i$ and $U_{i+1}$ then $Y\setminus N[x]$, and hence $N[y]\setminus N[x]$, contains all $k$ points in between $U_i$ and $U_{i+1}$.
If $Y$ contains elements of at most one set $U_i$, then $Y\setminus N[x]$, and hence $N[y]\setminus N[x]$, has at least $k+1$ points.  In both cases, $|N[y]\setminus N[x]| \ge k \ge 2q$.

Now suppose that $1 \le y \le k$.
Then $\{k+1,3k+1,\dots, (2q+1)k+1\} \cup \{-2k,-4k,\dots,-2qk\} \subseteq N[y]\setminus N[x]$ and $|N[y] \setminus N[x]| \ge 2q+1$.

Thus, in all situations $|N[y] \setminus N[x]| \ge 2q$, and
hence $|N[x] \cup N[y]| = |N[x]| + |N[y] \setminus N[x]| \ge d+1 + 2q$.
For any vertex subset $T$ such that $(\{x,y\},T)$ is a bipartite-hole, $T\subseteq V(G)\setminus (N[x]\cup N[y])$ and $|T| \le n - (d + 2q+1) =  d-2$.
Hence, there is no $(2,d-1)$-bipartite-hole and $\ti\alpha(G_{q,k}) \le d$.
\end{example}

If $q = 1$, the graphs $G_{q,k}$ are $d$-regular graphs on $2d + 1$ vertices, which are known to be Hamiltonian by a result of Nash-Williams~\cite{Nash-Williams} (see also \cite{Bondy1978OreCE}).  However, when $q \ge 2$ we do not know of any results showing that $G_{q,k}$ is Hamiltonian except McDiarmid and Yolov's Theorem \ref{MY} or our Theorem \ref{2-conn}.

The Chv\'{a}tal-Erd\H{o}s Theorem~\cite{ChvatalErdos} says that a graph on at least three vertices is Hamiltonian if $\kappa \ge \alpha$.  Bondy~\cite{Bondy1978OreCE} showed that Ore's condition $\sigma_2 \ge n$ implies that $\kappa \ge \alpha$, and hence the Chv\'{a}tal-Erd\H{o}s Theorem implies Ore's Theorem.  We do not know whether this is also true of our Theorem~\ref{2-conn}.

\begin{question}
Does every $2$-connected graph $G$ satisfying $\sigma_2(G) \ge 2\ti\alpha(G)-1$ also satisfy $\kappa(G) \ge \alpha(G)$?  Any $G$ that satisfies the first condition but not the second condition must have $\sigma_2(G) < n$.
\end{question}

The graphs in Example \ref{ex: cycle clique join} have $\kappa(G) \ge r \ge p+1 \ge \alpha(G)$. Finding the connectivity and independence number of the graphs in Example \ref{ex: circulant} seems to be a nontrivial exercise, so we have not determined whether they have $\kappa \ge \alpha$.

\bibliographystyle{plain}

\bibliography{References}

\end{document}